\newtheorem{thm}{Theorem}[section]
\newtheorem{lem}[thm]{Lemma}
\newtheorem{prop}[thm]{Proposition}
\theoremstyle{definition}
\newtheorem{defin}[thm]{Definition}
\newtheorem{exm}[thm]{Example}
\newtheorem{prob}[thm]{Problem}
 \numberwithin{equation}{section}
 \numberwithin{figure}{section}
\theoremstyle{remark}
\newtheorem{rmk}[thm]{Remark}
\newcommand{\Ext}{\operatorname{Ext}}
\newcommand{\Hom}{\operatorname{Hom}}
\newcommand{\ks}{k\Sigma_d}
\newcommand{\sgn}{\operatorname{sgn}}
\newcommand{\HH}{\operatorname{H}}
\begin{document}

\title[Specht module cohomology]
{\bf A combinatorial approach to Specht module cohomology.}
\author{\sc David J. Hemmer}
\address{Department of Mathematics\\ University at Buffalo, SUNY \\
244 Mathematics Building\\Buffalo, NY~14260, USA}
\thanks{Research of the  author was supported in part by NSF
grant  DMS-0808968} \email{dhemmer@math.buffalo.edu}

\date{October 2009}

\subjclass[2000]{Primary 20C30}

\begin{abstract}
For a Specht module $S^\lambda$ for the symmetric group $\Sigma_d$, the cohomology $\HH^i(\Sigma_d, S^\lambda)$ is known only in degree $i=0$. We give a combinatorial criterion equivalent to the nonvanishing of the degree $i=1$  cohomology, valid in odd characteristic. Our condition generalizes James' solution in degree zero. We apply this combinatorial description to give some  computations of Specht module cohomology, together with an explicit description of the corresponding modules. Finally we suggest some general conjectures that might be particularly amenable to proof using this description.
\end{abstract}
\maketitle

\section{Introduction}
\label{sec: Introduction} For a finite group $G$ and a $G$-module $M$ defined over a field $k$, computing cohomology groups with coefficients in $M$,  $\HH^i(G,M)=\Ext^i_G(k,M)$, is often difficult. For the symmetric group even the $i=1$ case is unknown for important classes of modules like Specht modules or irreducible modules. For $M$ a Specht module  and $i=1$ we describe a straightforward combinatorial condition equivalent to the nonvanishing of this cohomology group. We hope this approach may be useful in resolving some  conjectures about these cohomology groups, discussed in Section \ref{sec: further directions}.

For a partition $\lambda$ of $d$, let $S^\lambda$ denote the corresponding Specht module for the symmetric group $\Sigma_d$ and let $S_\lambda$ be its dual. (For descriptions of these modules and general information on symmetric group representation theory see \cite{Jamesbook}.) In even characteristic every Specht module is a dual Specht module and the problem of computing cohomology seems more difficult. Further, our combinatorial description below is false in characteristic two, so we will mostly focus on the case of odd characteristic.

There has been some success understanding $H^i(\Sigma_d, S_\lambda)$ for small $i$. In \cite{BKMdualspecht} (where only odd characteristic is considered), it is shown that the cohomology vanishes in degrees $1 \leq i \leq p-3$. For $p=3$ a complete description was given for $i=1, 2$.

For Specht modules only $i=0$ is completely understood. In \cite[Theorem 24.4]{Jamesbook}, James computes the invariants
 $\HH^0(\Sigma_d, S^\lambda) =\Hom_{\ks}(k, S^\lambda).$
The module $S^\lambda$ is a submodule of the transitive permutation module $M^\lambda$, and $\Hom_{\ks}(k, M^\lambda)$
is one-dimensional. So to compute $\Hom_{\ks}(k, S^\lambda)$ one needs to know whether the one-dimensional fixed-point space in $M^\lambda$ lies in $S^\lambda$ or not. James' proof is essentially combinatorial, using the kernel intersection theorem (Corollary 17.18 in \cite{Jamesbook} and Theorem \ref{thm: Kernel intersection theorem} below) to test if the fixed point lies in $S^\lambda$. The solution involves determining when certain binomial coefficients are divisible by $p$. A similar theme arises below in Section \ref{sec: two more general examples}.

We generalize James' work as follows. First we prove that any nonsplit extension of $S^\lambda$ by the trivial module is isomorphic to a submodule of $M^\lambda$. Then, using the kernel intersection theorem, we prove a combinatorial condition on a vector $u \in M^\lambda$ that is equivalent to the subspace $\langle S^\lambda, u \rangle$ being the nonsplit extension we desire. Next we apply the result to do some computations. The novelty here is that we can compute cohomology in a purely combinatorial way, without understanding projective resolutions, and that we end up with an explicit basis for the corresponding nonsplit extension. Finally we suggest some general conjectures that may be attacked with this result, and indeed which formed the motivation for this paper.

\section{Semistandard Homomorphisms and the Kernel Intersection Theorem}
\label{Sec: Semistandardhomomorphisms}
In this section we describe the kernel intersection theorem. A \emph{composition} of $d$ is a sequence $(\lambda_1, \lambda_2, \ldots)$ of nonnegative integers that sum to $d$. If the $\lambda_i$ are nonincreasing we say $\lambda$ is a \emph{partition} of $d$, and write $\lambda \vdash d$.
Denote by $[\lambda]$ the \emph{Young diagram} for $\lambda$:
$$[\lambda]= \{(i,j) \in {\mathbb N} \times {\mathbb N} \mid j \leq \lambda_i\}. $$
A $\lambda$-\emph{tableau} is an assignment of  $\{1, 2, \ldots, d\}$ to the boxes in $[\lambda]$. The symmetric group acts transitively on the set of $\lambda$-tableau. For a tableau $t$ its row stabilizer $R_t$ is the subgroup of $\Sigma_d$ fixing the rows of $t$ setwise. Say $t$ and $s$ are equivalent if $t=\pi s$ for some $\pi \in R_s$. An equivalence class is called a $\lambda$-\emph{tabloid}, and the class of $t$ is denoted $\{t\}$. The vector space with the set of $\lambda$-tabloids as a basis is the permutation module $M^\lambda$. If $\lambda=(\lambda_1, \lambda_2, \ldots, \lambda_s)$, there is a corresponding \emph{Young subgroup}
    $$\Sigma_\lambda \cong \Sigma_{\lambda_1} \times \cdots \times \Sigma_{\lambda_s}\leq \Sigma_d.$$
    The stabilizer of a $\lambda$-tabloid $\{t\}$ is clearly a conjugate of $\Sigma_\lambda$ so we have:
    \begin{equation}
            \label{eq: DefofMlambdainduced}
        M^\lambda \cong \operatorname{Ind}_{\Sigma_\lambda}^{\Sigma_d} k.
    \end{equation}

Since $M^\lambda$ is a transitive permutation module, it has a one-dimensional fixed-point space under the action of $\Sigma_d$. Let $f_\lambda \in M^\lambda$ denote the sum of all the $\lambda$-tabloids, so $f_\lambda$ spans this fixed subspace.









The Specht module $S^\lambda$ is defined explicitly as the submodule of $M^\lambda$ spanned by certain linear combinations of tabloids, called polytabloids. In characteristic zero the Specht modules $\{S^\lambda \mid \lambda \vdash d\}$  are  a complete set of nonisomorphic simple $\Sigma_d$-modules. James gave an alternate description of $S^\lambda$ inside $M^\lambda$ as the intersection of the kernels of certain homomorphism from $M^\lambda$ to other permutation modules.

Let $\lambda=(\lambda_1, \lambda_2, \ldots) \vdash d$ and let $\nu=(\lambda_1, \lambda_2, \ldots, \lambda_{i-1}, \lambda_i+\lambda_{i+1}-v, v, \lambda_{i+2}, \ldots).$ James defined \cite[Definition 17.10]{Jamesbook} the module homomorphism $\psi_{i,v}: M^\lambda \rightarrow M^\nu$ by:

\begin{equation}
\label{eq: defofpsiiv}
    \psi_{i,v}(\{t\})=\sum \left\{ \{t_1\} \mid \begin{array}{l}  \{t_1\} \text{ agrees with } \{t\} \text { on all except row $i$ and $i+1$,}\\ \text{and row $i+1$ of } \{t_1\} \text { is a subset of size $v$ in row $i+1$ of } \{t\}. \end{array} \right\}
        \end{equation}
        Notice that every $\nu$- tabloid  in $\psi_{i,v}(\{t\})$ has coefficient at most one. James proved:

\begin{thm}[Kernel Intersection Theorem] \cite[17.18]{Jamesbook}
\label{thm: Kernel intersection theorem}
Suppose $\lambda \vdash d$ has $r$ nonzero parts. Then
$$S^\lambda= \bigcap_{i=2}^r \bigcap_{v=0}^{\lambda_i-1}\ker (\psi_{i-1,v}) \subseteq M^\lambda$$
\label{prop: JamesKernelInt}
\end{thm}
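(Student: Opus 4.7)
The plan is to establish the equality via two inclusions.

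The inclusion $S^\lambda \subseteq \bigcap_{i,v} \ker \psi_{i-1,v}$ is the more elementary direction. Since $S^\lambda$ is generated as a $k\Sigma_d$-module by polytabloids $e_t = \sum_{\sigma \in C_t} \sgn(\sigma)\{\sigma t\}$, where $C_t$ denotes the column stabilizer of $t$, it suffices to show $\psi_{i-1,v}(e_t) = 0$ for $0 \le v \le \lambda_i - 1$. The numerical constraint $v < \lambda_i$ is essential: it guarantees that in every $\nu$-tabloid appearing in the support of $\psi_{i-1,v}(\{s\})$ for any tabloid $\{s\}$, at least one entry originally in row $i$ of $\{s\}$ now sits in row $i-1$. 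I would pick a column of $t$ with entries $a$ in row $i-1$ and $b$ in row $i$, so that $\tau = (a\,b) \in C_t$, and pair the summand of $e_t$ indexed by $\sigma$ with that indexed by $\tau\sigma$. Unpacking the sum in \eqref{eq: defofpsiiv}, the $\nu$-tabloids arising in $\psi_{i-1,v}(\{\sigma t\})$ match pairwise with those in $\psi_{i-1,v}(\{\tau\sigma t\})$ with opposite signs, producing total cancellation.

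The reverse inclusion $\bigcap_{i,v} \ker \psi_{i-1,v} \subseteq S^\lambda$ is the substantive direction. I would appeal to James's theory of semistandard homomorphisms, which provides a basis $\{\theta_T\}$ of $\Hom_{\ks}(M^\lambda, M^\mu)$ indexed by semistandard $\lambda$-tableaux $T$ of type $\mu$, as $\mu$ ranges over compositions of $d$. The structural input is that $S^\lambda$ is cut out as the common kernel of the non-trivial semistandard homomorphisms (those $\theta_T$ for $T$ other than the superstandard $\lambda$-tableau of type $\lambda$). Since each $\psi_{i-1,v}$ is itself a specific $\theta_T$, the task reduces to showing that every non-trivial $\theta_T$ already vanishes on $\bigcap_{i,v} \ker \psi_{i-1,v}$, equivalently that each such $\theta_T$ factors through some $\psi_{i-1,v}$.

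The principal obstacle is this factorization claim. The standard strategy is an induction on a measure of complexity of $\mu$: given a non-trivial $\theta_T : M^\lambda \to M^\mu$, one selects a specific pair $(i,v)$ so that $\theta_T$ factors as $\psi_{i-1,v}$ followed by a suitable semistandard homomorphism between permutation modules of simpler shape, and then invokes the inductive hypothesis. The combinatorial heart of the argument is matching each semistandard tableau $T$ with the correct $\psi_{i-1,v}$ — in essence, showing that any semistandard tableau can be built up by iterated row-splittings of the type encoded by the $\psi_{i-1,v}$. This is the technical core of James's proof in Section 17 of \cite{Jamesbook}, and the step where all the nontrivial work lies.
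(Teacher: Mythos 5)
The paper does not give a proof of the Kernel Intersection Theorem: it is cited verbatim from James's monograph \cite[17.18]{Jamesbook}, and nothing in Section~\ref{Sec: Semistandardhomomorphisms} attempts to re-derive it. So your proposal should be judged against James's own argument, not against anything in this paper.

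Your sketch of the easy inclusion has a genuine gap. You propose to fix a transposition $\tau=(a\,b)\in C_t$ in a column meeting rows $i-1$ and $i$, and to pair the summand indexed by $\sigma$ with the one indexed by $\tau\sigma$, claiming $\psi_{i-1,v}(\{\sigma t\})$ and $\psi_{i-1,v}(\{\tau\sigma t\})$ cancel term by term. This is false: already for $\lambda=(2,2)$, $t=\begin{smallmatrix}1&2\\3&4\end{smallmatrix}$, $v=1$, $\tau=(1\,3)$, one has $\psi_{1,1}(\{t\})=\overline{3}+\overline{4}$ while $\psi_{1,1}(\{\tau t\})=\overline{1}+\overline{4}$, and these do not coincide. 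The obstruction is that $\tau\sigma t$ differs from $\sigma t$ in rows determined by where $\sigma$ has moved $a$ and $b$, so the images under $\psi_{i-1,v}$ generally have different supports. The correct mechanism is a \emph{sign-reversing involution that depends on the target tabloid}: fix a $\nu$-tabloid $\{t_1\}$ and consider the set of $\sigma\in C_t$ for which $\{t_1\}$ occurs in $\psi_{i-1,v}(\{\sigma t\})$. Because $v<\lambda_i$, row $i-1$ of $\{t_1\}$ has $\lambda_{i-1}+\lambda_i-v>\lambda_{i-1}$ entries, all drawn from rows $i-1,i$ of $\sigma t$, so by pigeonhole some column of $\sigma t$ (hence of $t$) contributes two of them; transposing that pair defines the involution, flips the sign, and preserves the condition ``$\{t_1\}$ occurs.'' Your numerical observation about $v<\lambda_i$ is exactly the right ingredient, but it must be deployed per output tabloid, not via one global $\tau$.

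For the reverse inclusion you essentially defer the whole argument to James, acknowledging that the factorization of each nontrivial semistandard homomorphism through some $\psi_{i-1,v}$ is ``the technical core.'' That is an accurate assessment, but as written it is an outline rather than a proof, and the intermediate claim you call the ``structural input'' (that $S^\lambda$ is the common kernel of \emph{all} nontrivial semistandard homomorphisms out of $M^\lambda$) is itself not obviously easier than the theorem being proved; in James's development the two statements are tightly interwoven rather than one being a black box for the other. Since the paper simply cites the result, there is nothing wrong with deferring here, but you should be aware that the proposal as a whole does not constitute an independent proof.
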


Given a linear combination of tabloids $u \in M^\lambda$,  Theorem \ref{thm: Kernel intersection theorem} gives an explicit test for whether $u \in S^\lambda$. If $u=f_\lambda$, then $u$ spans the one-dimensional fixed-point space in $M^\lambda$. Applying the test in this case let James determine when  $\HH^0(\Sigma_d, S^\lambda)$ is nonzero, as follows. For an integer $t$ let $l_p(t)$ be the least nonnegative integer
satisfying $t<p\,^{l_p(t)}$. James proved:

\begin{thm}\cite[24.4]{Jamesbook}
\label{thm:JamestheoremonHom}
$\HH^0(\Sigma_d, S^\lambda)$ is zero unless  $\lambda_i \equiv -1 \mbox{ \rm mod }
p\,^{l_p(\lambda_{i+1})}$ for all $i$, in which case it is one-dimensional.
\end{thm}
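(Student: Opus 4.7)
The plan is to reduce the computation to a fixed-point test in $M^\lambda$, then apply the kernel intersection theorem to produce a divisibility condition on binomial coefficients, and finally translate that via Kummer's theorem into base-$p$ arithmetic.

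Since $S^\lambda \subseteq M^\lambda$, the space $\HH^0(\Sigma_d, S^\lambda) = (S^\lambda)^{\Sigma_d}$ sits inside the one-dimensional space $(M^\lambda)^{\Sigma_d} = k f_\lambda$, so it is at most one-dimensional and is nonzero iff $f_\lambda \in S^\lambda$. By Theorem \ref{thm: Kernel intersection theorem} this holds iff $\psi_{i,v}(f_\lambda) = 0$ for all $1 \leq i \leq r-1$ and $0 \leq v \leq \lambda_{i+1} - 1$. Because $\psi_{i,v}$ is $\Sigma_d$-equivariant and $(M^\nu)^{\Sigma_d} = k f_\nu$, we must have $\psi_{i,v}(f_\lambda) = c_{i,v} f_\nu$ for some scalar $c_{i,v} \in k$.

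To compute $c_{i,v}$, I would count the number of $\lambda$-tabloids $\{t\}$ mapping to a fixed $\nu$-tabloid $\{t_1\}$. From the definition of $\psi_{i,v}$, such a $\{t\}$ agrees with $\{t_1\}$ outside rows $i$ and $i+1$, and its row $i+1$ arises by adjoining $\lambda_{i+1}-v$ entries chosen from the $\lambda_i+\lambda_{i+1}-v$ entries of row $i$ of $\{t_1\}$ to the $v$ entries already in row $i+1$ of $\{t_1\}$. This gives $c_{i,v} = \binom{\lambda_i + \lambda_{i+1} - v}{\lambda_{i+1} - v}$. Re-indexing by $w = \lambda_{i+1} - v$, the condition $f_\lambda \in S^\lambda$ becomes: for every $i$ and every $1 \leq w \leq \lambda_{i+1}$, $\binom{\lambda_i + w}{w} \equiv 0 \pmod p$.

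It remains to match this to the stated congruence. By Kummer's theorem, $\binom{\lambda_i+w}{w} \equiv 0 \pmod p$ iff adding $\lambda_i$ and $w$ in base $p$ produces at least one carry. Setting $k = l_p(\lambda_{i+1})$, so that $p^{k-1} \leq \lambda_{i+1} < p^k$, I would check two directions: if the last $k$ base-$p$ digits of $\lambda_i$ are all $p-1$, then for any $1 \leq w < p^k$ the lowest nonzero base-$p$ digit of $w$ forces a carry; conversely, if some digit of $\lambda_i$ below position $k$ is less than $p-1$, taking $w = p^j$ for the smallest such $j$ produces $w \leq p^{k-1} \leq \lambda_{i+1}$ with no carries, hence a nonzero binomial coefficient mod $p$. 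This yields the stated criterion $\lambda_i \equiv -1 \pmod{p^{l_p(\lambda_{i+1})}}$. The main obstacle is the base-$p$ digit bookkeeping in this last paragraph; the binomial count for $c_{i,v}$ is the key combinatorial ingredient, and the remaining steps are formal.
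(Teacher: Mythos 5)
Your proof is correct and matches the approach the paper attributes to James: since $(M^\lambda)^{\Sigma_d}=kf_\lambda$ is one-dimensional, the question reduces to whether $f_\lambda\in S^\lambda$, which by the kernel intersection theorem is a divisibility condition $p\mid\binom{\lambda_i+w}{w}$ for $1\le w\le\lambda_{i+1}$, and your Kummer-based base-$p$ analysis correctly translates this into $\lambda_i\equiv-1\pmod{p^{l_p(\lambda_{i+1})}}$. The paper itself does not reproduce this proof (it cites \cite[24.4]{Jamesbook}), but the computation $\psi_{i,v}(f_\lambda)=\binom{\lambda_i+\lambda_{i+1}-v}{\lambda_{i+1}-v}f_\nu$ and the carry argument are exactly the ``determining when certain binomial coefficients are divisible by $p$'' that the introduction describes.
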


\section{Nonsplit extensions inside permutation modules.}
\label{sec: nonsplitextensionsinsideperm}
It is immediate that $\Hom_{\ks}(k, S^\lambda)$ is ``determined by" $M^\lambda$, since $S^\lambda \subseteq M^\lambda$. In this section we prove, in odd characteristic, that $\Ext^1_{\ks}(k, S^\lambda)$ is also determined completely by the structure of $M^\lambda$. First a lemma:

\begin{lem}Let $\lambda \vdash d.$
\label{lem: basicpropsofMlambda}
\begin{itemize}
  \item [(a)] $\Hom_{\ks}(k, M^\lambda) \cong k$.
  \item [(b)] For $p>2$, $\Hom_{\ks}(S^\lambda, M^\lambda) \cong k.$
  \item [(c)] For $p>2$, $\Ext^1_{\ks}(k, M^\lambda) =0$.
\end{itemize}
\end{lem}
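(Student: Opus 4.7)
The plan is to handle the three parts largely independently, exploiting the induced-module description $M^\lambda \cong \operatorname{Ind}_{\Sigma_\lambda}^{\Sigma_d} k$ from \eqref{eq: DefofMlambdainduced}.

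For (a), the cleanest route is Frobenius reciprocity:
$$\Hom_{\ks}(k, M^\lambda) \cong \Hom_{k\Sigma_\lambda}(k, k) \cong k.$$
This also falls out of the fact that $M^\lambda$ is a transitive permutation module, so its fixed subspace is one-dimensional and spanned by $f_\lambda$, which is what the paper has already noted.

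For (b), I would appeal to James' theory of semistandard homomorphisms (\cite[\S13]{Jamesbook}). In any characteristic, the semistandard homomorphisms $\theta_T \colon S^\lambda \to M^\mu$ indexed by semistandard $\lambda$-tableaux $T$ of content $\mu$ are linearly independent in $\Hom_{\ks}(S^\lambda, M^\mu)$, and for $p>2$ (or more generally whenever $\mu$ is not ``too special,'' but certainly when $\mu=\lambda$ in odd characteristic) they span. When $\mu=\lambda$ there is exactly one semistandard $\lambda$-tableau of content $\lambda$, namely the tableau with every entry of row $i$ equal to $i$. Hence $\Hom_{\ks}(S^\lambda, M^\lambda)$ is one-dimensional. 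The hypothesis $p>2$ enters here because the ``semistandard basis theorem'' can fail in characteristic two, so one cannot in general rule out extra homomorphisms when $p=2$.

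For (c), I would combine Shapiro's lemma with the Künneth formula. Since induction from a subgroup is exact and carries projectives to projectives,
$$\Ext^1_{\ks}(k, M^\lambda) = \Ext^1_{\ks}\bigl(k, \operatorname{Ind}_{\Sigma_\lambda}^{\Sigma_d} k\bigr) \cong \Ext^1_{k\Sigma_\lambda}(k,k) = \HH^1(\Sigma_\lambda, k).$$
Writing $\Sigma_\lambda \cong \Sigma_{\lambda_1}\times\cdots\times\Sigma_{\lambda_s}$ and applying Künneth, this becomes $\bigoplus_i \HH^1(\Sigma_{\lambda_i}, k)$. But $\HH^1(\Sigma_n, k) = \Hom(\Sigma_n^{\mathrm{ab}}, k^+)$, and for $n\geq 2$ the abelianization of $\Sigma_n$ is $\mathbb{Z}/2$, which has no nontrivial homomorphism into a field of odd characteristic; for $n\leq 1$ the cohomology is obviously zero. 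So every summand vanishes when $p>2$, giving (c).

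The main obstacle is part (b), because it is the only place the nontrivial input from symmetric group representation theory enters; the other two parts are formal consequences of the induced description of $M^\lambda$ together with standard group-cohomology machinery. Once (b) is cited from James, the whole lemma reduces to bookkeeping.
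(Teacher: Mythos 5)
Your proof is correct and follows essentially the same route as the paper: part (a) via Frobenius reciprocity from the induced-module description, part (b) by citing James' semistandard homomorphism theory (the paper invokes \cite[Corollary 13.17]{Jamesbook} directly, which your argument unpacks), and part (c) by Shapiro's lemma plus the vanishing of $\HH^1(\Sigma_n,k)$ in odd characteristic. Your write-up is more detailed than the paper's one-line proof, but the ideas are the same.
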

\begin{proof}
Parts (a) and (c) follow from \eqref{eq: DefofMlambdainduced} and the Eckmann-Shapiro lemma, since $\Ext^1_{\ks}(k,k)=0$ in odd characteristic. Part (b) is  \cite[Corollary 13.17 ]{Jamesbook}.
\end{proof}

The next result says that, for $p>2$, any nonsplit extension of $S^\lambda$ by the trivial module embeds in $M^\lambda$.
\begin{thm}
\label{thm: anynonsplitsitsinsideMlambda}
Suppose $p >2$ and suppose there is a nonsplit short exact sequence:
\begin{equation}
\label{eq: SESnonsplit}
0 \rightarrow S^\lambda \rightarrow U \rightarrow k \rightarrow 0.
 \end{equation}
 Then $U$ is isomorphic to a submodule of $M^\lambda$.
\end{thm}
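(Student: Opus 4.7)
The plan is to produce the embedding $U \hookrightarrow M^\lambda$ by lifting the canonical inclusion $\iota\colon S^\lambda \hookrightarrow M^\lambda$ along the surjection $U \twoheadrightarrow \cdot$, using the vanishing $\Ext^1_{\ks}(k, M^\lambda) = 0$ provided by Lemma \ref{lem: basicpropsofMlambda}(c).

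Concretely, I apply the functor $\Hom_{\ks}(-, M^\lambda)$ to the short exact sequence \eqref{eq: SESnonsplit}. This yields a long exact sequence whose relevant portion is
\begin{equation*}
0 \to \Hom_{\ks}(k, M^\lambda) \to \Hom_{\ks}(U, M^\lambda) \to \Hom_{\ks}(S^\lambda, M^\lambda) \to \Ext^1_{\ks}(k, M^\lambda).
\end{equation*}
By Lemma \ref{lem: basicpropsofMlambda}(c) the last term vanishes, so the map $\Hom_{\ks}(U, M^\lambda) \to \Hom_{\ks}(S^\lambda, M^\lambda)$ is surjective. The inclusion $\iota$ is a nonzero, hence (by Lemma \ref{lem: basicpropsofMlambda}(b)) spanning, element of $\Hom_{\ks}(S^\lambda, M^\lambda)$. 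Thus there exists $\phi \in \Hom_{\ks}(U, M^\lambda)$ whose restriction to $S^\lambda \subseteq U$ equals $\iota$.

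The remaining task is to verify that $\phi$ is injective, and this is where the non-split hypothesis is used. Since $\phi|_{S^\lambda} = \iota$ is injective, $\ker\phi \cap S^\lambda = 0$, so the composition $\ker\phi \hookrightarrow U \twoheadrightarrow U/S^\lambda \cong k$ is injective. Therefore $\ker\phi$ is either $0$ or a one-dimensional trivial submodule of $U$. In the latter case, $\ker\phi$ maps isomorphically onto the quotient $k$, which produces a $\ks$-module section $k \to U$ of the sequence \eqref{eq: SESnonsplit}, contradicting the non-split assumption. Hence $\ker\phi = 0$ and $\phi$ realizes $U$ as a submodule of $M^\lambda$.

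I expect no substantive obstacle: the argument is a homological lift followed by a dimension/splitting check. The only delicate ingredient is the input $\Ext^1_{\ks}(k, M^\lambda) = 0$, which in turn rests on $\Ext^1_{\ks}(k,k) = 0$ in odd characteristic via Eckmann--Shapiro — and this is precisely where the hypothesis $p > 2$ enters the proof.
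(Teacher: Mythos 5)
Your proof is correct and takes essentially the same approach as the paper: apply $\Hom_{\ks}(-, M^\lambda)$, use $\Ext^1_{\ks}(k, M^\lambda) = 0$ (via Eckmann--Shapiro, where $p>2$ enters) to lift the inclusion $S^\lambda \hookrightarrow M^\lambda$ to a map $U \to M^\lambda$, then deduce injectivity from the non-split hypothesis. Your argument merely spells out the final injectivity step in more detail than the paper, which states it without elaboration.
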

\begin{proof}
Apply $\Hom_{\ks}(-, M^\lambda)$ to \eqref{eq: SESnonsplit} to obtain a long exact sequence. From Lemma \ref{lem: basicpropsofMlambda} we get:
    \begin{equation}
            \label{eq: longexactsequenceembedsinmlambda}
                0 \rightarrow k \rightarrow \Hom_{\ks}(U, M^\lambda) \stackrel{f}{\rightarrow} \Hom_{\ks}(S^\lambda, M^\lambda)\cong k \rightarrow 0.
    \end{equation}
Thus the map $f$ in \eqref{eq: longexactsequenceembedsinmlambda} is surjective, so the embedding of $S^\lambda$ into $M^\lambda$ lifts to a map  $\tilde{f}: U \rightarrow M^\lambda$ which is faithful on $S^\lambda$.  Thus $\tilde{f}$ must also be injective, otherwise the sequence \eqref{eq: SESnonsplit} would be split.
\end{proof}

\begin{rmk}
Theorem \ref{thm: anynonsplitsitsinsideMlambda} is false in characteristic 2, indeed the case $\lambda=(2) \vdash 2$ is a counterexample. The Specht module $S^{(2)}$ is trivial and there is a nonsplit extension of $k$ by $k$ which clearly is not a submodule of the one-dimensional module $M^{(2)}$.
\end{rmk}

\begin{rmk}
Lemma \ref{lem: basicpropsofMlambda}(b) and (c) hold with $M^\lambda$ replaced by the Young module $Y^\lambda$ (which is the unique indecomposable direct summand of $M^\lambda$ containing the Specht module $S^\lambda$). Thus Theorem 3.2 could be ``strengthened" to say that $U$ is isomorphic to a submodule of  $Y^\lambda$. At present we have no way to use this since not even the dimension of $Y^\lambda$ is known, let alone a combinatorial description of it as a submodule of $M^\lambda$.
\end{rmk}

\begin{rmk}If $p>3$ then $\Ext^1_{\ks}(\sgn, k)=0$, and hence $\Ext^1_{\ks}(\sgn, M^\lambda)=0$. An argument as in Theorem \ref{thm: anynonsplitsitsinsideMlambda} would imply any nonsplit extension of $S^\lambda$ by the sign module appears as a submodule of $M^\lambda$. However there are no such extensions as
$$\Ext^1_{\ks}(\sgn, S^\lambda) \cong \HH^1(\Sigma_d, S_{\lambda'})=0$$ by \cite{BKMdualspecht}. More generally one could ask if a nonsplit extension of $S^\lambda$ by an irreducible module must embed in $M^\lambda$. In the case where $\lambda$ is $p$-restricted the answer is always yes, as the Young module $Y^\lambda$ is injective. The general problem seems to remain open.
\end{rmk} 

Theorem \ref{thm: anynonsplitsitsinsideMlambda} says that $M^\lambda$ completely controls $\HH^1(\Sigma_d, S^\lambda)$. In particular, $\HH^1(\Sigma_d, S^\lambda)$ is nonzero precisely when such a $U$ exists inside $M^\lambda$. Constructing such a $U$ is equivalent to finding a vector $u \not \in S^\lambda$ such that subspace $\langle S^\lambda, u \rangle$ gives the desired module. Necessary and sufficient conditions on such a $u$ are given next.

\begin{thm}
\label{thm: requirementonu}
Let $p > 2$ and $\lambda \vdash d$. Then $\Ext^1_{\ks}(k,S^\lambda) \neq 0$ if and only if there exists $u \in M^\lambda$ with the following properties:
\begin{enumerate}
\item For each $\psi_{i,v}:  M^\lambda \rightarrow M^\nu$  appearing in Theorem  \ref{prop: JamesKernelInt},  $\psi_{i,v}(u)$ is a multiple of $f_\nu$,  at least one of which is a nonzero multiple.
\item There does not exist an $a \neq 0$ such that all the $\psi_{i,v}(af_\lambda - u)$ are zero.
\end{enumerate}
If so then the subspace spanned by $S^\lambda$ and u is a submodule that is a nonsplit extension of $S^\lambda$ by $k$.
\end{thm}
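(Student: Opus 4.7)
The plan is to translate, via the kernel intersection theorem (Theorem \ref{prop: JamesKernelInt}) and Lemma \ref{lem: basicpropsofMlambda}(a), the three algebraic requirements on $U := \langle S^\lambda, u\rangle$ -- being a $\Sigma_d$-submodule, having trivial one-dimensional quotient by $S^\lambda$, and being a non-split extension -- into the combinatorial conditions (1) and (2).

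For the ``if'' direction, assume $u$ satisfies (1) and (2). Each $\psi_{i,v}$ is $\Sigma_d$-equivariant, and by (1) $\psi_{i,v}(u)$ is a multiple of the fixed vector $f_\nu$, so $\psi_{i,v}(\sigma u - u) = 0$ for every $\sigma \in \Sigma_d$. Theorem \ref{prop: JamesKernelInt} then forces $\sigma u - u \in S^\lambda$, so $U$ is a submodule; the nonvanishing clause of (1) guarantees $u \notin S^\lambda$, so $U/S^\lambda$ is trivial and one-dimensional. For non-splitness, I would argue via $\Sigma_d$-fixed vectors: a splitting of $U$ would produce a trivial complement to $S^\lambda$, generated by a fixed element of $U \subseteq M^\lambda$, necessarily a multiple of $f_\lambda$ by Lemma \ref{lem: basicpropsofMlambda}(a). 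If $f_\lambda \in S^\lambda$ no such complement exists and we are done (condition (2) is automatic here, since $af_\lambda - u \in S^\lambda$ with $a \neq 0$ would force $u \in S^\lambda$, contradicting (1)). If $f_\lambda \notin S^\lambda$, splitting would give $f_\lambda = au + s$ with $a \neq 0$ and $s \in S^\lambda$, which rearranges to $a^{-1}f_\lambda - u \in S^\lambda$, directly contradicting (2).

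For the ``only if'' direction, Theorem \ref{thm: anynonsplitsitsinsideMlambda} embeds any non-split extension $U$ into $M^\lambda$ extending the inclusion $S^\lambda \hookrightarrow M^\lambda$. I would pick any $u \in U \setminus S^\lambda$. Since $U/S^\lambda \cong k$, $\sigma u - u \in S^\lambda$ for all $\sigma$, so $\psi_{i,v}(u)$ is $\Sigma_d$-fixed and hence a multiple of $f_\nu$; at least one is nonzero because $u \notin S^\lambda$, yielding (1). For (2), applying $\Hom_{\ks}(k,-)$ to the short exact sequence shows that non-splitness is equivalent to $(U)^{\Sigma_d} = (S^\lambda)^{\Sigma_d}$. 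If some $af_\lambda - u \in S^\lambda$ with $a \neq 0$, then $f_\lambda \in U$, so $f_\lambda \in (U)^{\Sigma_d} = (S^\lambda)^{\Sigma_d}$, whence $f_\lambda \in S^\lambda$ and $u \in S^\lambda$, a contradiction.

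The only genuine subtlety is the dichotomy based on whether $f_\lambda \in S^\lambda$, equivalently whether $\HH^0(\Sigma_d, S^\lambda) \neq 0$; condition (2) is vacuous in one case and the substantive obstruction to splitting in the other. Everything else is a direct translation through the kernel intersection theorem and the one-dimensionality of $\Sigma_d$-invariants in permutation modules.
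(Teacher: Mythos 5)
Your proof is correct and follows essentially the same route as the paper: embed the extension via Theorem \ref{thm: anynonsplitsitsinsideMlambda}, use the kernel intersection theorem to translate ``$\sigma u - u \in S^\lambda$'' into condition (1), and use the one-dimensionality of the fixed space of $M^\lambda$ to see that condition (2) is exactly the obstruction to a trivial complement. The only difference is that you spell out the non-splitness step (via the dichotomy on whether $f_\lambda \in S^\lambda$, and via $U^{\Sigma_d} = (S^\lambda)^{\Sigma_d}$) more explicitly than the paper, which just asserts it in one sentence.
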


\begin{proof}
Suppose $\Ext^1_{\ks}(k,S^\lambda) \neq 0$. From Theorem \ref{thm: anynonsplitsitsinsideMlambda} we can find a submodule $U$ of $M^\lambda$ that is a nonsplit extension of $S^\lambda$ by $k$. Choose $u \in M^\lambda$ such that $U$ is spanned by $u$ and $S^\lambda$. Since $S^\lambda$ is in the kernel of all the $\psi_{i,v}$'s, and $U/S^\lambda \cong k$, we have for $\sigma \in \Sigma_d$ that $\sigma(u)=u+v$ for some $v \in S^\lambda$. Applying $\psi_{i,v}$ we get that $$\sigma(\psi_{i,v}(u))=\psi_{i,v}(u)\,\, \forall \sigma \in \Sigma_d,$$ so $\psi_{i,v}(u)$ is a multiple of $f_\nu$. But $u \not\in S^\lambda$ so not all the multiples are zero, and thus $(1)$ above holds for $u$. To see $(2)$ holds, suppose there is an $a \neq 0$ that $\psi_{i,v}(af_\lambda - u) =0$ for all $\psi_{i,v}$. This means $af_\lambda-u \in S^\lambda$ by  the kernel intersection theorem. If $af_\lambda \in S^\lambda$ then so is $u$, a contradiction. Otherwise $f_\lambda \in U$ but $f_\lambda \not\in S^\lambda$ and so $U \cong S^\lambda \oplus k$, a contradiction to $U$ being a nonsplit extension.

Conversely suppose such a $u$ exists. For any $\sigma \in \Sigma_d$, notice that:
    \begin{eqnarray*}
          \psi_{i,v}(\sigma(u)-u) &=& \sigma \psi_{i,v}(u)-\psi_{i,v}(u) \\
         &=& 0 \textrm{ by } (1).
    \end{eqnarray*}
Thus $\sigma(u) -u \in S^\lambda$, so $U:=\langle S^\lambda, u \rangle $ is a submodule of $M^\lambda$ such that $U/S^\lambda \cong k$. Condition (2) ensures $U$ is not a direct sum of $S^\lambda$ and the one-dimensional trivial submodule of $M^\lambda$.

\end{proof}

\begin{rmk}
\label{rmk: noneedforsecondcondition}
When $f_\lambda \not\in S^\lambda$ then $U=S^\lambda \oplus \langle f_\lambda \rangle$ is a submodule such that $U/S^\lambda \cong k$, which is a split extension of $S^\lambda$ by $k$. The condition on $u$ given by (2) in the theorem ensures that the submodule $\langle u, S^\lambda \rangle$ is not this split extension. In particular condition (2) is implied by condition (1) when $f_\lambda \in S^\lambda$, i.e. for the $\lambda$ where $\Hom_{\ks}(k,S^\lambda) \neq 0$. These $\lambda$ are given by Theorem \ref{thm:JamestheoremonHom}.
\end{rmk}

\begin{rmk}
\label{rmk: manychoiceofu}
The choice of $u$ in the theorem is far from unique. Given a $u$ that works any other vector of the form $u +v$ for $v \in S^\lambda$ will also work. Thus there is some strategy involved in the choice of $u$ for proving theorems.
\end{rmk}

\section{Two small examples}
\label{sec: Some examples}

In this section we give two small examples to illustrate Theorem \ref{thm: requirementonu}. In the next section we obtain general results that generalize these examples. For a two-part partition $\lambda=(\lambda_1, \lambda_2)$, a $\lambda$-tabloid is entirely determined by the entries of its second row, and we will represent them by just the last row, with a bar over to reflect the equivalence relation. For example the tabloid:
$$\{t\}=\left\{\begin{array}{ccc}
    1 & 5 & 2 \\
    3 & 4 &
  \end{array} \right\}$$
  will be denoted $\overline{34}$. For $\lambda=(d)$ the unique $\lambda$-tabloid will be denoted $\overline{\emptyset}$.
\begin{exm}
\label{exm :33p=3} Let $p=3$ and $\lambda=(3,3) \vdash 6.$ Define $u \in M^{(3,3)}$ by:
\begin{eqnarray}
\label{eq: ufor33}
u&=&\overline{134}+\overline{135}+\overline{136}+
\overline{145}+\overline{146}+\overline{156}+\overline{234}+\overline{235}+\overline{236}
+\overline{245}+\overline{246}+\overline{256}\\&&-\overline{123}-\overline{124}
-\overline{125}-\overline{126}.\nonumber
\end{eqnarray}
One easily checks that:
\begin{eqnarray*}
\psi_{1,0}(u)&=&(12-4)\overline{\emptyset} \equiv -f_{(6)}\\
\psi_{1,1}(u)&=&(6-4)(\overline{1}+\overline{2})+(6-1)(\overline{3}+\overline{4}+\overline{5}+\overline{6}) \equiv -f_{(5,1)}\\
\psi_{1,2}(u)&=&-4(\overline{12})+(3-1)(\overline{13}+\cdots +\overline{56}) \equiv -f_{(4,2)}.
\end{eqnarray*}
Thus $u$ satisfies condition $(1)$ of Theorem \ref{thm: requirementonu}. Since $f_{(3,3)} \not \in S^{(3,3)}$ we must also check condition (2). In fact:
$$\psi_{1,1}(af_{(3,3)}-u) =(-a-1)f_{(6)}, \,\,\,\, \psi_{1,0}(af_{(3,3)}-u)=(a-1)f_{(5,1)}.$$ There is no such $a \neq 0$ which makes both these zero. Thus condition (2) holds and we conclude:
\begin{prop}
In characteristic three, $\HH^1(\Sigma_6, S^{(3,3)}) \neq 0$. Further, the subspace of $M^{(3,3)}$ spanned by $S^{(3,3)}$ and $u$ from \eqref{eq: ufor33} is a submodule that is a nonsplit extension of $S^{(3,3)}$ by $k$.
\end{prop}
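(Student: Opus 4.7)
The plan is to apply Theorem \ref{thm: requirementonu} directly to the given vector $u$. The two conditions of that theorem must be verified for $\lambda = (3,3)$, which has only $r=2$ parts, so the only maps arising in the kernel intersection theorem are $\psi_{1,v}$ for $v = 0, 1, 2$.

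For condition (1), I would verify the three displayed identities by computing $\psi_{1,v}(u)$ directly from the formula \eqref{eq: defofpsiiv} and the definition of $u$ in \eqref{eq: ufor33}. Each tabloid $\overline{abc}$ appearing in $u$ contributes to $\psi_{1,v}(u)$ the sum of all tabloids whose second row is a $v$-element subset of $\{a,b,c\}$, weighted by the $\pm 1$ sign inherited from $u$. Organizing these contributions by the symmetric roles of the entries $\{1,2\}$ versus $\{3,4,5,6\}$ and reducing modulo $3$ yields the displayed values $-f_{(6)}$, $-f_{(5,1)}$, and $-f_{(4,2)}$ respectively. Since $-1 \not\equiv 0 \pmod 3$, each of these is a nonzero scalar multiple of the corresponding $f_\nu$, so condition (1) of Theorem \ref{thm: requirementonu} holds.

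For condition (2), I would first compute $\psi_{1,v}(f_{(3,3)})$ for each $v$. Since $f_{(3,3)}$ is the sum of all $\binom{6}{3}=20$ tabloids, a straightforward count of preimages produces $\psi_{1,v}(f_{(3,3)}) = c_v f_\nu$ for explicit binomial constants $c_v$. Combining with the values from the previous step, the expression $\psi_{1,v}(a f_{(3,3)} - u)$ takes the form $(c_v a - \alpha_v)f_\nu$ for constants $\alpha_v \in \mathbb{F}_3$, yielding exactly the two displayed linear equations. Vanishing of the first forces $a \equiv 1 \pmod 3$ while vanishing of the second forces $a \equiv -1 \pmod 3$, and these conditions are incompatible modulo $3$, so no $a \neq 0$ can annihilate all three images simultaneously. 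Note that this second condition genuinely needs checking here because $\lambda = (3,3)$ fails the congruence of Theorem \ref{thm:JamestheoremonHom} in characteristic three, so $f_{(3,3)} \not\in S^{(3,3)}$ (see Remark \ref{rmk: noneedforsecondcondition}).

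With both conditions established, Theorem \ref{thm: requirementonu} immediately gives that $\langle S^{(3,3)}, u \rangle \subseteq M^{(3,3)}$ is a submodule forming a nonsplit extension of $S^{(3,3)}$ by $k$, so $\HH^1(\Sigma_6, S^{(3,3)}) = \Ext^1_{k\Sigma_6}(k, S^{(3,3)}) \neq 0$. The main obstacle is not conceptual but combinatorial: one must expand and cancel the images of the sixteen tabloids of $u$ under three distinct $\psi_{1,v}$ maps. The actual cleverness lies in finding the vector $u$ in the first place, since Remark \ref{rmk: manychoiceofu} notes that $u$ is only determined modulo $S^{(3,3)}$; once a suitable $u$ is in hand the verification is mechanical, and the signs and coefficients in \eqref{eq: ufor33} have evidently been chosen to make the resulting arithmetic collapse neatly mod $3$.
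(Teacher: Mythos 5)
Your proposal is correct and follows exactly the paper's approach: directly verify conditions (1) and (2) of Theorem \ref{thm: requirementonu} for the given $u$ by computing $\psi_{1,v}(u)$ and $\psi_{1,v}(f_{(3,3)})$ for $v=0,1,2$, exploiting the symmetry between $\{1,2\}$ and $\{3,4,5,6\}$, and observing that the two linear conditions on $a$ are incompatible modulo $3$. You correctly note (as in Remark \ref{rmk: noneedforsecondcondition}) that condition (2) must be checked here because $f_{(3,3)} \notin S^{(3,3)}$ by Theorem \ref{thm:JamestheoremonHom}.
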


\end{exm}

\begin{exm}
\label{exm: 83}
Next consider $\lambda=(8,3)$ in characteristic 3.  Let
\begin{equation}
\label{eq: ufor83}
u=\sum \left\{\{t\} \in M^{(8,3)} \mid 1,2,\text{ and } 3 \text{ appear in the first row of } \{t\} \right\}.
\end{equation}

One easily checks that:
\begin{eqnarray*}\psi_{1,0}(u) &=& \binom{8}{3}f_{(6)} \equiv -f_{(6)}\\
\psi_{1,1}(u)&=&\binom{7}{2}(\overline{4}+ \cdots + \overline{11}) \equiv 0\\
\psi_{1,2}(u)&=&\binom{6}{1}(\overline{45} + \overline{46}+ \cdots + \overline{10 \, 11}) \equiv 0 \end{eqnarray*}
so $u$ satisfies condition (1) in Theorem \ref{thm: requirementonu}. Since $f_{(8,3)} \in S^{(8,3)}$ by Theorem \ref{thm:JamestheoremonHom}, condition $(2)$ does not need to be checked (see Remark \ref{rmk: noneedforsecondcondition}).

\end{exm}

Thus we have:

\begin{prop}
In characteristic three, $\HH^1(\Sigma_{11}, S^{(8,3)}) \neq 0$. Further, the subspace of $M^{(8,3)}$ spanned by $S^{(8,3)}$ and $u$ from \eqref{eq: ufor83} is a submodule that is a nonsplit extension of $S^{(8,3)}$ by $k$.
\end{prop}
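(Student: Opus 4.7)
The plan is to apply Theorem \ref{thm: requirementonu} directly to the vector $u$ from \eqref{eq: ufor83}, verifying condition (1) via the explicit computations in Example \ref{exm: 83} and bypassing condition (2) using Remark \ref{rmk: noneedforsecondcondition}.

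For condition (1), I would first observe that $u$ is the sum of the $\binom{8}{3}=56$ tabloids whose second row is a $3$-subset of $\{4,\ldots,11\}$. A straightforward double-count then gives
$$\psi_{1,0}(u) = \binom{8}{3}\,\overline{\emptyset},\qquad \psi_{1,1}(u) = \binom{7}{2}\sum_{k=4}^{11}\overline{k},\qquad \psi_{1,2}(u) = \binom{6}{1}\sum_{4\le a<b\le 11}\overline{ab},$$
since each $k$ (respectively each pair $\{a,b\}$) drawn from $\{4,\ldots,11\}$ is contained in $\binom{7}{2}$ (resp.\ $\binom{6}{1}$) of the $3$-subsets summed over in $u$. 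Reducing $56$, $21$, $6$ modulo $3$ yields $-1$, $0$, $0$, so $\psi_{1,0}(u)\equiv -f_{(11)}$ while the other two images vanish; each image is a scalar multiple of the relevant $f_\nu$, and $\psi_{1,0}(u)$ is a nonzero one, so condition (1) holds.

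For condition (2), I would appeal to Remark \ref{rmk: noneedforsecondcondition}, which reduces the task to checking $f_{(8,3)}\in S^{(8,3)}$, i.e.\ that $\Hom_{\ks}(k,S^{(8,3)})\neq 0$. Theorem \ref{thm:JamestheoremonHom} records this criterion as $\lambda_1\equiv -1\pmod{3^{\,l_3(\lambda_2)}}$; since $l_3(3)=2$ (the smallest $k$ with $3<3^k$ being $k=2$), the requirement reads $8\equiv -1\pmod 9$, which is immediate. With both conditions of Theorem \ref{thm: requirementonu} now in hand, that theorem yields $\HH^1(\Sigma_{11},S^{(8,3)})\neq 0$ together with the identification of $\langle S^{(8,3)},u\rangle\subseteq M^{(8,3)}$ as a nonsplit extension of $S^{(8,3)}$ by $k$. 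There is no substantive obstacle: the proof collapses to the three binomial reductions above plus one invocation of James' congruence, the only subtlety being that Remark \ref{rmk: noneedforsecondcondition} spares us from an independent calculation of the $\psi_{1,v}(af_{(8,3)}-u)$.
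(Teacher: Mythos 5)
Your proposal is correct and takes essentially the same route as the paper: you compute $\psi_{1,0}(u)$, $\psi_{1,1}(u)$, $\psi_{1,2}(u)$ via exactly the same three binomial counts $\binom{8}{3}$, $\binom{7}{2}$, $\binom{6}{1}$, reduce them modulo $3$ to get $-1,0,0$, and then invoke Remark \ref{rmk: noneedforsecondcondition} together with Theorem \ref{thm:JamestheoremonHom} (checking $8\equiv -1\pmod 9$ since $l_3(3)=2$) to dispense with condition (2). The only differences are cosmetic: you spell out the double-counting argument that the paper leaves as ``one easily checks,'' and you correctly write the image of $\psi_{1,0}$ as a multiple of $\overline{\emptyset}=f_{(11)}$ where the paper's display contains the misprint $f_{(6)}$.
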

Example \ref{exm: 83} has the same flavor as the proof of Theorem  \ref{thm:JamestheoremonHom} in \cite[p.101-102]{Jamesbook}, in that knowing the congruence class of binomial coefficients modulo $p$ is important, specifically knowing that $\binom{7}{2}$ and $\binom{6}{1}$ are both zero modulo three. The two examples in the next section are more general results, generalizing the previous two examples and illustrating this theme.

\section{Two more general examples.}
\label{sec: two more general examples}
The examples in this section are already known, in the sense that  $\HH^1(\Sigma_d, S^{(\lambda_1, \lambda_2)})$ is known in odd characteristic by work in \cite{HNcohomologyspechtmodules}. However the proof there uses work of Erdmann on the special linear group $SL_2(k)$ and Schur functor techniques. The proofs here are purely combinatorial and completely contained within the symmetric group theory. Moreover, the extensions constructed with Theorem \ref{thm: requirementonu} come equipped with an explicit basis and a description of the $\Sigma_d$-action, which is new even for these cases.

At several times we will need to know when a binomial coefficient is divisible by $p$. This is easily determined from the following well-known result of Kummer:

\begin{prop}\cite[p.116]{Kummerwhatpowerpdivides}
\label{prop: Kummerbinomialcoef}
The highest power of a prime $p$ that divides the binomial coefficient $\binom{x+y}{x}$ is equal to the number of ``carries" that occur when the integers $x$ and $y$ are added in $p$-ary notation.
\end{prop}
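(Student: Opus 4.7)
The plan is to reduce Kummer's formula to Legendre's classical identity
$$v_p(n!) = \frac{n - s_p(n)}{p-1},$$
where $s_p(n)$ denotes the sum of the base-$p$ digits of $n$ and $v_p$ the $p$-adic valuation. Once this identity is in hand, writing $\binom{x+y}{x} = (x+y)!/(x!\,y!)$ and applying it to each factorial gives at once
$$v_p\binom{x+y}{x} = \frac{s_p(x) + s_p(y) - s_p(x+y)}{p-1},$$
and the task is reduced to identifying this quotient with the number of carries in the base-$p$ addition of $x$ and $y$.

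For Legendre's identity I would use the standard counting argument: $v_p(n!) = \sum_{i \geq 1} \lfloor n/p^i \rfloor$, since $\lfloor n/p^i \rfloor$ counts the multiples of $p^i$ in $\{1,\ldots,n\}$ and summing in $i$ picks up each integer with multiplicity equal to its own $p$-adic valuation. A short manipulation using the base-$p$ expansion $n = \sum a_i p^i$ together with $\lfloor n/p^i \rfloor = \sum_{j \geq i} a_j p^{j-i}$ collapses this sum to $(n - s_p(n))/(p-1)$.

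For the carry step, write $x = \sum a_i p^i$ and $y = \sum b_i p^i$, and let $c_0 = 0$ and $c_{i+1} \in \{0,1\}$ be the carries produced by the school-book addition algorithm, so that the $i$-th digit of $x+y$ is $d_i = a_i + b_i + c_i - p\, c_{i+1}$. Summing over $i$ telescopes to
$$s_p(x+y) = s_p(x) + s_p(y) - (p-1)\sum_{i \geq 1} c_i,$$
because each carry contributes $-p$ to its own column and $+1$ to the next, a net change of $-(p-1)$ in the total digit sum. Since the total number of carries is exactly $\sum_{i \geq 1} c_i$, this matches the expression for $v_p\binom{x+y}{x}$ above and completes the proof.

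The only real subtlety is careful bookkeeping of the carries at the top of the expansion (a carry out of the leading nonzero position creates a new digit of $x+y$ but still contributes correctly to the telescoping). Conceptually, however, the proof is routine, and since the proposition is quoted verbatim from Kummer's original paper the authors are essentially certain to invoke it as a black box rather than reproduce the argument.
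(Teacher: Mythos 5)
The paper does not prove this proposition at all; it is quoted directly as a classical result of Kummer with a citation to his original paper, exactly as you anticipated in your final sentence. Your derivation — Legendre's formula $v_p(n!) = (n - s_p(n))/(p-1)$, the resulting expression $v_p\binom{x+y}{x} = (s_p(x)+s_p(y)-s_p(x+y))/(p-1)$, and the telescoping identity $s_p(x+y) = s_p(x)+s_p(y) - (p-1)\sum_i c_i$ from the digit-by-digit addition with carries — is the standard textbook proof and is correct as written.
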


Our first general example is the case $\lambda=(p^a, p^a)$ for $a \geq 1$. We first define the element $u \in M^{(p^a,p^a)}$ and then show (when $p>2$) that it satisfies (1) and (2) of Theorem \ref{thm: requirementonu}. For $0 \leq i \leq p^a-1$ define:
\begin{equation}
\label{eq: defofvi}
v_i=\sum \left\{\{t\} \in M^{(p^a,p^a)} \mid \text{ Exactly $i$ of $\{1,2,3, \ldots, p^a-1\}$ lie in row two of } \{t\} \right\}.
\end{equation}
Recall that for a two-part composition $\mu=(\mu_1, \mu_2)$ we are denoting $\mu$ tabloids in $M^\mu$ by the entries in the second row. For example
 
 $$\overline{1, 2, 3,  \cdots , t , p^a, p^a+1, \cdots,  p^a+s-t-1} $$ is an element of $M^{(2p^a-s, s)}$.
 
 It is straightforward to compute  $\psi_{1,s}(v_i)$:

\begin{lem}
\label{lem: psimapsonvi}
Let $v_i$ be as in \eqref{eq: defofvi}. Then:
\begin{itemize}
  \item [(a)] $\psi_{1,0}(v_i) = \binom{p^a-1}{i}\binom{p^a+1}{p^a-i} \overline{\emptyset}$.
  \item [(b)] For $ 1\leq t \leq s <p^a$, the coefficient of $$\overline{1, 2, 3,  \cdots , t , p^a, p^a+1, \cdots,  p^a+s-t-1} \in M^{(2p^a-s,s)}$$ in $\psi_{1,s}(v_i)$ is $\binom{p^a-1-t}{i-t}\binom{p^a-s+t+1}{p^a-s+t-i}.$
\end{itemize}
\end{lem}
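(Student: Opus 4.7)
The plan is to exploit linearity of $\psi_{1,s}$ together with a direct combinatorial count. Write $A=\{1,2,\ldots,p^a-1\}$ (the ``small'' letters, of size $p^a-1$) and $B=\{p^a, p^a+1,\ldots,2p^a\}$ (the ``large'' letters, of size $p^a+1$). By definition, $v_i$ is the sum of those $(p^a,p^a)$-tabloids whose second row contains exactly $i$ letters of $A$ and hence exactly $p^a-i$ letters of $B$; in particular the number of tabloids occurring in $v_i$ is $\binom{p^a-1}{i}\binom{p^a+1}{p^a-i}$.

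For part (a), the map $\psi_{1,0}$ sends every $(p^a,p^a)$-tabloid to the unique tabloid $\overline{\emptyset}\in M^{(2p^a)}$, so $\psi_{1,0}(v_i)$ equals the total number of summands of $v_i$ times $\overline{\emptyset}$, giving the stated formula.

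For part (b), set $R:=\{1,2,\ldots,t\}\cup\{p^a,p^a+1,\ldots,p^a+s-t-1\}$, which is the second row of the target tabloid $T$. By definition of $\psi_{1,s}$, for each summand $\{t\}$ of $v_i$ the coefficient of $T$ in $\psi_{1,s}(\{t\})$ equals $1$ if $R$ is contained in row $2$ of $\{t\}$ (since we may then select $R$ as the size-$s$ subset) and $0$ otherwise; note that once $R$ is chosen as the selected subset, row $1$ of the resulting tabloid is automatically the complement of $R$ in $\{1,\ldots,2p^a\}$, which is row $1$ of $T$. Hence the coefficient of $T$ in $\psi_{1,s}(v_i)$ is exactly the number of tabloids $\{t\}$ appearing in $v_i$ whose second row contains $R$.

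To count these, observe that $R\cap A=\{1,\ldots,t\}$ already accounts for $t$ of the $i$ letters of $A$ required in row $2$, so we must choose the remaining $i-t$ letters from $A\setminus R=\{t+1,\ldots,p^a-1\}$, a set of size $p^a-1-t$; this contributes $\binom{p^a-1-t}{i-t}$. Similarly $R\cap B=\{p^a,\ldots,p^a+s-t-1\}$ accounts for $s-t$ of the $p^a-i$ letters of $B$ needed in row $2$, so we must choose the remaining $(p^a-i)-(s-t)=p^a-s+t-i$ from $B\setminus R=\{p^a+s-t,\ldots,2p^a\}$, a set of size $p^a-s+t+1$; this contributes $\binom{p^a-s+t+1}{p^a-s+t-i}$. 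Multiplying yields the claimed coefficient. The only real ``difficulty'' here is purely bookkeeping: keeping the sizes of $A\setminus R$ and $B\setminus R$ and the residual counts $i-t$ and $p^a-s+t-i$ correctly aligned, and being careful that the hypothesis $1\leq t\leq s<p^a$ ensures none of these indices becomes negative or exceeds the size of the ambient set.
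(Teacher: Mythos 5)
Your proof is correct and follows essentially the same approach as the paper's: a direct combinatorial count of the $(p^a,p^a)$-tabloids in $v_i$ whose second row contains the prescribed set $R$, splitting the choices between the ``small'' letters $\{1,\ldots,p^a-1\}$ and ``large'' letters $\{p^a,\ldots,2p^a\}$. You are somewhat more explicit than the paper in justifying that each such tabloid contributes the target tabloid with coefficient exactly one, but the underlying argument is identical.
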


\begin{proof}
Part (a) is just a count of the number of tabloids that appear in the sum defining $v_i$.  There are $\binom{p^a-1}{i}$ choices for the row two entries from $\{1, 2, \ldots, p^a-1\}$ and then $\binom{p^a+1}{p^a-i}$ for the remaining entries from $\{p^a, p^a+1, \ldots, 2p^a\}$.

For part (b) we count the $(p^a, p^a)$-tabloids in the sum defining $v_i$ that  contribute to that coefficient when plugged into $\psi_{1,s}$. Such a tabloid must have $\{1, 2, \ldots, t\}$ in the second row, so there are $\binom{p^a-1-t}{i-t}$ choices for the remaining $i-t$ entries from $\{1, 2, \ldots, p^a-1\}$ and then $\binom{p^a-s+t+1}{p^a-s+t-i}$ for the remaining entries from $\{p^a, p^a+1, \ldots, 2p^a\}$.
\end{proof}
\begin{rmk}
\label{rmk: coefssame}Lemma \ref{lem: psimapsonvi}(b) gives the coefficient in $\psi_{1,s}(v_i)$  of a tabloid containing in its second row precisely $\{1, 2, \ldots, t\}$ from among $\{1, 2, \ldots, p^a-1\}$ and $\{p^a+1, p^a+2, \ldots, , p^a+s-t-1\}$ from among $\{p^a+1, p^a+2, \ldots, 2p^a\}$.
However it is clear from the definition of $v_i$ that  any tabloid in $M^{(2p^a-s,s)}$ with second row containing exactly $t$ entries from $\{1,2, \ldots, p^a-1\}$ and $s-t$ entries from $\{p^a, p^a+1, \ldots, 2p^a\}$ will have  the same coefficient. Thus  Lemma \ref{lem: psimapsonvi}(b) gives a complete description of $\psi_{1,s}(v_i)$.
\end{rmk}

We now define the $u$ that, together with $S^{(p^a, p^a)}$, will give the nonsplit extension. Define:
\begin{equation}
\label{eq: defineuinpapacase}
u= \sum_{m=0}^{p^a-1}(m+1)v_m \in M^{(p^a, p^a)}.
\end{equation}
We will show $u$  satisfies condition (1) of Theorem \ref{thm: requirementonu} by first showing $\psi_{1,0}(u) \neq 0$ and then showing $\psi_{1,s}(u)=0$ for $s \geq 1$.

\begin{lem}
\label{lem: psi10uinpapacase}
For $u$ as in \eqref{eq: defineuinpapacase}, we have $\psi_{1,0}(u)=\overline{\emptyset}$
\end{lem}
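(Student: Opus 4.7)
The plan is to apply $\psi_{1,0}$ termwise to the definition \eqref{eq: defineuinpapacase} of $u$, and then identify the surviving binomial coefficients modulo $p$ using Kummer's criterion (Proposition \ref{prop: Kummerbinomialcoef}). By linearity and Lemma \ref{lem: psimapsonvi}(a),
$$\psi_{1,0}(u) \;=\; \left(\sum_{m=0}^{p^a-1}(m+1)\binom{p^a-1}{m}\binom{p^a+1}{p^a-m}\right)\overline{\emptyset},$$
so the task reduces to showing the scalar coefficient is congruent to $1$ modulo $p$.

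Next I would rewrite $\binom{p^a+1}{p^a-m}=\binom{p^a+1}{m+1}$ and analyse its $p$-adic valuation. The base-$p$ expansion of $p^a+1$ has digit $1$ in positions $0$ and $a$ and $0$ elsewhere, so by Lucas' theorem (equivalently Proposition \ref{prop: Kummerbinomialcoef}), $\binom{p^a+1}{m+1}$ is nonzero mod $p$ precisely when the base-$p$ digits of $m+1$ are dominated digit-wise by those of $p^a+1$. Since $1\le m+1\le p^a$, the only admissible values are $m+1=1$ and $m+1=p^a$, i.e.\ $m=0$ and $m=p^a-1$; all other terms in the sum vanish modulo $p$.

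Finally I would evaluate the two surviving terms. For $m=0$ the contribution is $1\cdot\binom{p^a-1}{0}\binom{p^a+1}{p^a} = p^a+1 \equiv 1\pmod p$. For $m=p^a-1$ the contribution is $p^a\cdot\binom{p^a-1}{p^a-1}\binom{p^a+1}{1} = p^a(p^a+1)\equiv 0\pmod p$ (this is where $a\ge 1$ is used). Summing gives exactly $1$, so $\psi_{1,0}(u)=\overline{\emptyset}$.

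There is essentially no deep obstacle here: the lemma is a direct computation, with the only subtle step being the Lucas-style observation that the two extremal values of $m$ are the only ones that survive mod $p$. The real work in this section will come in the companion lemma showing $\psi_{1,s}(u)=0$ for $s\ge 1$, where the vanishing must come from a cancellation among many nonzero binomial terms rather than from a single surviving summand.
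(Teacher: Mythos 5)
Your proof is correct and follows essentially the same route as the paper: apply Lemma \ref{lem: psimapsonvi}(a), use Kummer's criterion to kill all terms except $m\in\{0,p^a-1\}$, and note that the $m=p^a-1$ term is annihilated by its coefficient $p^a$. The only cosmetic difference is that you spell out the Lucas-style digit argument for $\binom{p^a+1}{m+1}$, which the paper leaves implicit.
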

\begin{proof}
By Prop \ref{prop: Kummerbinomialcoef} and Lemma \ref{lem: psimapsonvi}(a) we have $\psi_{1,0}(v_i)=0$ for all $i \not \in \{0, p^a-1\}$. Thus $$\psi_{1,0}(u)=\psi_{1,0}(v_0) +p^a\psi_{1,0}(v_{p^a-1})= \binom{p^a-1}{0}\binom{p^a+1}{p^a}\overline{\emptyset} = \overline{\emptyset}.$$
\end{proof}

Next we show $\psi_{1,i}(u)=0$ for all $i \geq 1.$
\begin{defin}
For $ 1\leq t \leq s <p^a$, let $A_{s,t}$ be the coefficient of
     \begin{equation}
            \label{eq: defofhugetabloid}
 \overline{1, 2, 3,  \cdots , t , p^a, p^a+1, \cdots,  p^a+s-t-1} \in M^{(2p^a-s, s)}
        \end{equation}
  in $\psi_{1,s}(u)$. The tabloid in \eqref{eq: defofhugetabloid} is just our canonical representative among the $(2p^a-s,s)$-tabloids with second row containing $t$ entries from $\{1, 2, \ldots, p^a-1\}$ and $s-t$ entries from $\{p^a, p^a+1, \ldots, 2p^a\}$. Knowing the coefficients of these tabloids gives all the coefficients, see Remark \ref{rmk: coefssame}.

From Lemma \ref{lem: psimapsonvi}(b) we have:
\begin{equation}
\label{eq: defofAst}
A_{s,t}=\sum_{m=t}^{p^a-1}(m+1)\binom{p^a-1-t}{m-t}\binom{p^a-s+t+1}{m+1}.
\end{equation}
\end{defin}
The next two lemmas combined will prove that all the $A_{s,t}, s \geq 1$ are divisible by $p$.
\begin{lem}
\label{lem: basecaseforass-1=0}
For $1 \leq s<p^a$, $A_{s,s-1} \equiv 0 \mod p.$
\end{lem}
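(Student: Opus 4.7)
The plan is to substitute $t = s-1$ directly into the formula \eqref{eq: defofAst} for $A_{s,t}$ and exploit the vanishing of $\binom{p^a}{k} \pmod p$ for $0 < k < p^a$ that follows from Kummer's theorem (Proposition \ref{prop: Kummerbinomialcoef}).

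Setting $t = s-1$, the factor $\binom{p^a-s+t+1}{m+1}$ collapses to the very convenient $\binom{p^a}{m+1}$, giving
\[
A_{s,s-1} \;=\; \sum_{m=s-1}^{p^a-1} (m+1)\binom{p^a-s}{m-s+1}\binom{p^a}{m+1}.
\]
The next step is to observe that in the range $s-1 \leq m \leq p^a-1$, the index $m+1$ runs over $\{s, s+1, \ldots, p^a\}$, so in particular $1 \leq m+1 \leq p^a$. For $1 \leq m+1 \leq p^a - 1$, Proposition \ref{prop: Kummerbinomialcoef} applied to $\binom{p^a}{m+1} = \binom{(m+1)+(p^a-m-1)}{m+1}$ shows that adding $m+1$ and $p^a-m-1$ in base $p$ must produce at least one carry (otherwise the sum would have no digit in position $a$), hence $\binom{p^a}{m+1} \equiv 0 \pmod p$.

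Thus every term in the sum with $m < p^a - 1$ vanishes mod $p$, and only the single term $m = p^a - 1$ can contribute. That surviving term equals
\[
p^a \cdot \binom{p^a-s}{p^a-s} \cdot \binom{p^a}{p^a} \;=\; p^a,
\]
which is divisible by $p$ since $a \geq 1$. This yields $A_{s,s-1} \equiv 0 \pmod p$, as required. No real obstacle is expected here: the entire argument is engineered around the choice $t = s-1$, which is precisely the value that makes the second binomial coefficient collapse to $\binom{p^a}{m+1}$ and triggers the elementary divisibility input from Kummer's theorem.
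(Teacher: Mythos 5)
Your proof is correct and takes exactly the same approach as the paper: substitute $t=s-1$ so that the second binomial coefficient collapses to $\binom{p^a}{m+1}$, note this vanishes mod $p$ for $1 \leq m+1 \leq p^a-1$, and observe the last surviving term $m=p^a-1$ carries the factor $m+1=p^a \equiv 0$. The paper states this more tersely but the argument is identical.
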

\begin{proof}
When $t=s-1$ the second binomial coefficient in each term of \eqref{eq: defofAst} is $\binom{p^a}{m+1}$, which is congruent to zero except for the last term $m=p^a-1$, in which case the $(m+1)$ in front is zero.
\end{proof}
\begin{lem}
\label{lem: inductioncaseforass-1=0}
$1 \leq t \leq s<p^a$, we have:
\begin{equation}
A_{s,t}-A_{s,t-1}=\binom{2p^a-s-1}{p^a-1} \equiv 0 \mod  p.
\end{equation}
\end{lem}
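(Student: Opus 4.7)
The plan is to compute $A_{s,t}$ in closed form and then subtract. Starting from
\[
A_{s,t}=\sum_{m=t}^{p^a-1}(m+1)\binom{p^a-1-t}{m-t}\binom{p^a-s+t+1}{m+1},
\]
I would use the identity $(m+1)\binom{p^a-s+t+1}{m+1}=(p^a-s+t+1)\binom{p^a-s+t}{m}$ to pull the factor $(p^a-s+t+1)$ outside. After substituting $k=m-t$, the remaining inner sum becomes
\[
\sum_{k=0}^{p^a-1-t}\binom{p^a-1-t}{k}\binom{p^a-s+t}{k+t}=\sum_{k}\binom{p^a-1-t}{k}\binom{p^a-s+t}{p^a-s-k},
\]
which by Vandermonde's convolution equals $\binom{2p^a-s-1}{p^a-s}=\binom{2p^a-s-1}{p^a-1}$. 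Thus
\[
A_{s,t}=(p^a-s+t+1)\binom{2p^a-s-1}{p^a-1}.
\]

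Subtracting the analogous formula for $A_{s,t-1}$ gives the coefficient $(p^a-s+t+1)-(p^a-s+t)=1$, yielding the claimed identity $A_{s,t}-A_{s,t-1}=\binom{2p^a-s-1}{p^a-1}$.

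For the mod-$p$ congruence, I would invoke Kummer's theorem (Proposition~\ref{prop: Kummerbinomialcoef}) applied to the decomposition $2p^a-s-1=(p^a-1)+(p^a-s)$. In base $p$ the integer $p^a-1$ has all $a$ digits equal to $p-1$, while for $1\leq s\leq p^a-1$ the integer $p^a-s$ is a positive integer strictly less than $p^a$, so its base-$p$ expansion has at least one nonzero digit in the bottom $a$ positions. Adding any such nonzero digit to $p-1$ forces a carry, so there is at least one carry in the base-$p$ addition, and hence $p\mid\binom{2p^a-s-1}{p^a-1}$.

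There is no real obstacle here: once the Vandermonde simplification is in place the identity is immediate, and Kummer's theorem handles the divisibility. The only care needed is at the summation boundary (verifying that the upper limit $m=p^a-1$ is correctly captured when one rewrites the sum as an unrestricted Vandermonde sum, which is fine since $\binom{p^a-1-t}{k}$ vanishes for $k>p^a-1-t$).
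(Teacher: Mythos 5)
Your proof is correct, and it takes a genuinely different and in fact cleaner route than the paper. The paper manipulates the difference $A_{s,t}-A_{s,t-1}$ directly: it applies Pascal's rule to the second binomial coefficient, regroups terms, rewrites $(w+1)\binom{p^a-t-1}{w-t}+(w+2)\binom{p^a-t-1}{w-t+1}$ as $(w+1)\binom{p^a-t}{w-t+1}+\binom{p^a-t-1}{w-t+1}$, subtracts off $A_{s,t-1}$ termwise, and then invokes a Vandermonde-type identity from Graham--Knuth--Patashnik to identify the leftover sum as $\binom{2p^a-s-1}{p^a-1}$. You instead absorb the factor $m+1$ via $(m+1)\binom{N}{m+1}=N\binom{N-1}{m}$ and then hit the resulting sum with Vandermonde's convolution to get a closed form
\[
A_{s,t}=(p^a-s+t+1)\binom{2p^a-s-1}{p^a-1},
\]
from which the difference is immediate. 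This is more economical (two textbook identities instead of a multi-stage reindexing) and actually buys more: the closed form shows $A_{s,s-1}=p^a\binom{2p^a-s-1}{p^a-1}$ (recovering the base case Lemma~\ref{lem: basecaseforass-1=0}) and shows directly that every $A_{s,t}$ with $s\geq 1$ is divisible by $p$, so it could replace both lemmas in one stroke. Your Kummer argument at the end matches the paper's. The one bookkeeping point — that the Vandermonde sum's nominal upper limit $p^a-s$ may exceed your summation range $p^a-1-t$ when $t<s-1$, and is capped below it when $t=s$ — you correctly note is harmless because the extra or missing binomials vanish.
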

\begin{proof} Apply the identity $$\binom{p^a-s+t+1}{m+1}=\binom{p^a-s+t}{m}+\binom{p^a-s+t}{m+1}$$ to the second binomial coefficient in \eqref{eq: defofAst}. Expand out and collect terms to obtain:
\begin{equation}
\begin{split}
\label{eq: astafterfirststep}
A_{s,t}=(t+1)\binom{p^a-t-1}{0}\binom{p^a-s+t}{t} + \\ \sum_{w=t}^{p^a-2}\left[(w+1)\binom{p^a-t-1}{w-t}+(w+2)\binom{p^a-t-1}{w-t+1}\right ]\binom{p^a-s+t}{w+1}+
\\p^a\binom{p^a-t-1}{p^a-t-1}\binom{p^a-s+t}{p^a}.
\end{split}
\end{equation}
Finally replace each $(w+1)\binom{p^a-t-1}{w-t}+(w+2)\binom{p^a-t-1}{w-t+1}$in \eqref{eq: astafterfirststep} by
$$(w+1)\binom{p^a-t}{w-t+1} + \binom{p^a-t-1}{w-t+1}.$$
and subtract off
\begin{equation}
\label{eq:Ast-1}
A_{s,t-1}=\sum_{w=t-1}^{p^a-1}(w+1)\binom{p^a-t}{w-t+1}\binom{p^a-s+t}{w+1}
\end{equation}
to obtain
\begin{eqnarray}
\label{eq: Ast-Ast-1}
A_{s,t}-A_{s,t-1}&=&\sum_{w=t}^{p^a-1}\binom{p^a-t-1}{w-t}\binom{p^a-s+t}{w}\\&=& \binom{2p^a-s-1}{p^a-1} \text{ by \cite[(5.23)]{GrahamKnuthPatashnikConcreteMathematics}}\nonumber\\& \equiv & 0 \text{ by Proposition \ref{prop: Kummerbinomialcoef}. }\nonumber
\end{eqnarray}
 The last congruence is clear from Proposition \ref{prop: Kummerbinomialcoef}. Expanding $p^a-1$ in $p$-ary notation, all the digits are $p-1$. Thus adding anything nonzero in $p$-ary notation will always result in at least one ``carry".
\end{proof}

\begin{thm}
\label{thm: pp} Let $k$ have characteristic $p \geq 3$. Then for any $a \geq 1$: $$\HH^1(\Sigma_{2p^a}, S^{(p^a,p^a)}) \neq 0.$$
\end{thm}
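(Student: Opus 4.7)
The plan is to apply Theorem~\ref{thm: requirementonu} to $\lambda = (p^a, p^a)$ with the vector $u = \sum_{m=0}^{p^a-1}(m+1)v_m$ from equation~\eqref{eq: defineuinpapacase}. Because $\lambda$ has only two rows, the only maps appearing in the kernel intersection theorem are $\psi_{1,v}$ for $0 \leq v \leq p^a - 1$, so both conditions of Theorem~\ref{thm: requirementonu} reduce to finitely many checks.

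For condition (1), Lemma~\ref{lem: psi10uinpapacase} already supplies $\psi_{1,0}(u) = \overline{\emptyset}$, which is the required nonzero multiple of $f_{(2p^a)}$. For each $1 \leq s \leq p^a - 1$, I need $\psi_{1,s}(u) = 0$; by Remark~\ref{rmk: coefssame} this reduces to verifying $A_{s,t} \equiv 0 \pmod{p}$ for every $0 \leq t \leq s$. Lemma~\ref{lem: basecaseforass-1=0} gives the base case $A_{s,s-1} \equiv 0$, and the telescoping identity $A_{s,t} \equiv A_{s,t-1} \pmod{p}$ of Lemma~\ref{lem: inductioncaseforass-1=0} (valid for $1 \leq t \leq s$) then propagates the vanishing upward to $A_{s,s}$ by taking $t = s$, and downward through $A_{s,s-2}, A_{s,s-3}, \ldots, A_{s,0}$ by taking $t = s-1, s-2, \ldots, 1$. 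This settles condition~(1).

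For condition (2), first note that $f_{(p^a,p^a)} \notin S^{(p^a,p^a)}$, so Remark~\ref{rmk: noneedforsecondcondition} does not deliver~(2) for free: Theorem~\ref{thm:JamestheoremonHom} would demand $p^a \equiv -1 \pmod{p^{a+1}}$, which manifestly fails. A direct count shows $\psi_{1,s}(f_\lambda) = \binom{2p^a - s}{p^a - s} f_{(2p^a - s, s)}$, since every $(2p^a-s,s)$-tabloid arises with the same multiplicity from $f_\lambda$. Setting $s = p^a - 1$ yields $(p^a + 1) f_{(p^a+1, p^a-1)} \equiv f_{(p^a+1, p^a-1)} \pmod{p}$, which is nonzero. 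Combining this with $\psi_{1, p^a-1}(u) = 0$ from~(1), for any scalar $c \neq 0$ we obtain $\psi_{1, p^a-1}(c f_\lambda - u) = c f_{(p^a+1, p^a-1)} \neq 0$. Hence no such $c$ can simultaneously kill all the $\psi_{1,v}(c f_\lambda - u)$, and condition~(2) holds. Theorem~\ref{thm: requirementonu} now yields the nonvanishing.

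The substantive obstacle is entirely packaged into the preceding lemmas: establishing $A_{s,t} \equiv 0 \pmod{p}$ uniformly, which rests on Kummer's theorem applied to $\binom{2p^a - s - 1}{p^a - 1}$ through the identity~\eqref{eq: Ast-Ast-1}. Once condition~(1) is in hand, condition~(2) costs only a single evaluation of $\psi_{1, p^a - 1}(f_\lambda)$, and the argument assembles immediately.
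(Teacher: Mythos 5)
Your proof is correct and takes essentially the same route as the paper: you apply Theorem~\ref{thm: requirementonu} to the vector $u$ from \eqref{eq: defineuinpapacase}, invoke Lemmas~\ref{lem: psi10uinpapacase}, \ref{lem: basecaseforass-1=0}, and \ref{lem: inductioncaseforass-1=0} exactly as the paper does for condition~(1), and verify condition~(2) by the same single evaluation $\psi_{1,p^a-1}(f_\lambda)\neq 0$ combined with $\psi_{1,p^a-1}(u)=0$. Incidentally, your target $f_{(p^a+1,p^a-1)}$ for $\psi_{1,p^a-1}$ is the correct one, since $\nu=(p^a+p^a-(p^a-1),\,p^a-1)=(p^a+1,p^a-1)$; the paper's displayed $f_{(2p^a-1,1)}$ at that point appears to be a typo, though the nonzero coefficient $\binom{p^a+1}{1}$ and the conclusion are unaffected.
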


\begin{rmk} As mentioned earlier, we know this nonzero cohomology group is exactly one-dimensional, but this does not follow from our proof. However our proof gives an explicit basis for the nonsplit extension, which is new even in this case.
\end{rmk}

\begin{proof}We apply Theorem \ref{thm: requirementonu} to the $u$ defined in \eqref{eq: defineuinpapacase}. Lemmas \ref{lem: basecaseforass-1=0} and \ref{lem: inductioncaseforass-1=0} imply all the $A_{s,t}$ are congruent to 0 when $s \geq 1$, and thus $\psi_{1,s}(u)=0$ for $s \geq 1$. Together with Lemma \ref{lem: psi10uinpapacase}, we see $u$ satisfies part (1) of Theorem \ref{thm: requirementonu}. Finally note that:

$$\psi_{1,p^a-1}(f_{(p^a, p^a)}) = \binom{p^a+1}{1}f_{(2p^a-1,1)} \neq 0$$ but $\psi_{1,p^a-1}(u)=0$, so condition (2) is also satisfied.

\end{proof}
\begin{rmk}
Notice that the $u$ in \eqref{eq: ufor33} is \emph{not} the same as that in \eqref{eq: defineuinpapacase} for the case $p^a=3$, illustrating Remark \ref{rmk: manychoiceofu}. Of course the difference between the two lies in $S^{(3,3)}$.
\end{rmk}

Example \ref{exm: 83}  generalizes directly. Specifically we have:

\begin{thm} Let $\lambda=(p^b-1, p^a)$ for $a<b$ and let
\begin{equation}\nonumber
u=\sum \left\{\{t\} \in M^{\lambda} \mid 1,2,\ldots, p^a \text{ \rm appear in the first row of } \{t\} \right\}.
\end{equation}
Then $u$ satisfies Theorem \ref{thm: requirementonu} and thus $\HH^1(\Sigma_{p^b+p^a-1}, S^\lambda) \neq 0$.
\end{thm}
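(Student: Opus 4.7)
The plan is to verify conditions (1) and (2) of Theorem \ref{thm: requirementonu} for the given $u$. Fix $v$ with $0 \leq v \leq p^a - 1$ and set $\nu = (p^b + p^a - 1 - v, v)$. To compute $\psi_{1,v}(u)$, I would note that the coefficient of a $\nu$-tabloid $\{t_1\}$ in $\psi_{1,v}(u)$ equals the number of $\lambda$-tabloids in $u$ whose second row contains the second row of $\{t_1\}$. If the second row of $\{t_1\}$ meets $\{1,\ldots,p^a\}$ this count is zero; otherwise one extends by choosing the remaining $p^a - v$ entries from the $p^b - 1 - v$ unused elements of $\{p^a+1,\ldots,p^b+p^a-1\}$. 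Hence
\[
\psi_{1,v}(u) \;=\; \binom{p^b-1-v}{p^a-v}\sum_{\{t_1\}}\{t_1\},
\]
where the sum ranges over $\nu$-tabloids whose second row avoids $\{1,\ldots,p^a\}$.

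Condition (1) requires each $\psi_{1,v}(u)$ to be a multiple of $f_\nu$ with at least one nonzero. When $v=0$ the only $\nu$-tabloid is $\overline{\emptyset}=f_\nu$, and adding $p^a$ and $p^b - 1 - p^a$ in base $p$ is carry-free (the second integer has digit $p-2$ in position $a$ and digit $p-1$ in every other position $0,\ldots,b-1$, while $p^a$ contributes only a $1$ in position $a$). So by Proposition \ref{prop: Kummerbinomialcoef} one has $\binom{p^b-1}{p^a}\not\equiv 0 \pmod p$, giving the required nonvanishing. For $1 \leq v \leq p^a-1$, however, the restricted sum is a \emph{proper} sub-sum of $f_\nu$, so the only way for $\psi_{1,v}(u)$ to be a multiple of $f_\nu$ is for the binomial coefficient to vanish modulo $p$. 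Here the hypothesis $a<b$ enters: $p^a - v$ is nonzero with all its nonzero base-$p$ digits strictly below position $a$, so adding it to $p^b - 1 - p^a$ (whose digits in positions $0,\ldots,a-1$ are all $p-1$) must produce at least one carry. Proposition \ref{prop: Kummerbinomialcoef} then forces $\binom{p^b-1-v}{p^a-v}\equiv 0 \pmod p$, and $\psi_{1,v}(u)=0$.

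Finally, condition (2) is automatic by Remark \ref{rmk: noneedforsecondcondition} as soon as $f_\lambda \in S^\lambda$. By Theorem \ref{thm:JamestheoremonHom}, membership of $f_\lambda$ in $S^\lambda$ is equivalent to $p^b - 1 \equiv -1 \pmod{p^{l_p(p^a)}}$, and since $l_p(p^a)=a+1 \leq b$ this congruence is immediate. Theorem \ref{thm: requirementonu} then yields $\HH^1(\Sigma_{p^b+p^a-1}, S^\lambda)\neq 0$. The only slightly delicate step is the base-$p$ bookkeeping used to show the binomial coefficients vanish, directly generalizing the observations $\binom{7}{2}\equiv\binom{6}{1}\equiv 0 \pmod 3$ from Example \ref{exm: 83}; everything else is a straightforward unpacking of the definitions.
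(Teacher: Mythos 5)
Your proof is correct, and it follows exactly the approach the paper sketches (the paper ``leaves the details to the reader,'' pointing to Example \ref{exm: 83} as the model). You correctly reduce $\psi_{1,v}(u)$ to the single binomial coefficient $\binom{p^b-1-v}{p^a-v}$ times the restricted tabloid sum, use Kummer's theorem to show it is a unit for $v=0$ and vanishes modulo $p$ for $1\le v\le p^a-1$ (which is where $a<b$ is used), and correctly invoke Remark \ref{rmk: noneedforsecondcondition} together with Theorem \ref{thm:JamestheoremonHom} to dispense with condition (2) since $l_p(p^a)=a+1\le b$ gives $p^b-1\equiv -1\pmod{p^{a+1}}$.
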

\begin{proof}
We leave the details to the reader. As in the $\lambda=(8,3)$ case all the $\psi_{1,s}$ vanish on $u$ except $\psi_{1,0}$. The verification is much more straightforward then the previous example, and does not require any identities involving binomial coefficients.
\end{proof}

\section{Further Directions}
\label{sec: further directions}
For a partition $\lambda=(\lambda_1, \lambda_2, \ldots) \vdash d$ define $p\lambda=(p\lambda_1, p\lambda_2, \ldots ) \vdash pd.$ In \cite{HemmerCohomologyandgenericSpecht} we proved the following:

\begin{thm}
\cite[Theorem 6.5.7]{HemmerCohomologyandgenericSpecht}
\label{thm: extinjectionontwistedSpecht} Let $\lambda \vdash d$ and let $p>2$. Then there is a isomorphism:
$$ \HH^1(\Sigma_{pd}, S^{p\lambda}) \cong \HH^1(\Sigma_{p^2d}, S^{p^2\lambda}) .$$

\end{thm}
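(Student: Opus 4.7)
The plan is to use the combinatorial criterion of Theorem \ref{thm: requirementonu} together with an explicit ``Frobenius blow-up'' construction to exhibit the isomorphism. Define $\Phi\colon M^{p\lambda}\to M^{p^2\lambda}$ on the tabloid basis by replacing each entry $i\in\{1,\ldots,pd\}$ with the consecutive block $\{p(i-1)+1,\ldots,pi\}\subseteq\{1,\ldots,p^2d\}$ in the same row, then extending linearly. This $\Phi$ intertwines the $\Sigma_{pd}$-action with the action of $\Sigma_{pd}$ on $M^{p^2\lambda}$ through the block embedding $\Sigma_{pd}\hookrightarrow\Sigma_{p^2d}$, satisfies $\Phi(f_{p\lambda})=f_{p^2\lambda}$, and (as the Kernel Intersection Theorem should show) carries $S^{p\lambda}$ into $S^{p^2\lambda}$.

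First I would establish the forward direction: given $u\in M^{p\lambda}$ satisfying conditions (1) and (2) of Theorem \ref{thm: requirementonu} for $p\lambda$, show that $\Phi(u)$ satisfies the analogous conditions for $p^2\lambda$. For each $\psi_{i,v}$ appearing in the Kernel Intersection Theorem for $p^2\lambda$ I would split into cases. When $v=pv'$, $\psi_{i,v}(\Phi(u))$ should reduce, up to a controllable scalar, to $\Phi(\psi_{i,v'}(u))$, hence to a multiple of $f_{p^2\nu}$ by hypothesis. When $v\not\equiv 0\pmod p$, any subset of size $v$ from row $i{+}1$ of $\Phi(\{t\})$ must split at least one blow-up block of size $p$, and a careful accounting combined with Kummer's theorem (Proposition \ref{prop: Kummerbinomialcoef}) should force $\psi_{i,v}(\Phi(u))$ to be a scalar multiple of $f_{p^2\nu}$, typically zero. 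Condition (2) then transfers using the injectivity of $\Phi$ together with $\Phi(f_{p\lambda})=f_{p^2\lambda}$.

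Second, to upgrade the resulting injection $\HH^1(\Sigma_{pd},S^{p\lambda})\hookrightarrow\HH^1(\Sigma_{p^2d},S^{p^2\lambda})$ to an isomorphism, I would prove the reverse direction: every $u'\in M^{p^2\lambda}$ satisfying conditions (1) and (2) can, after subtraction of an element of $S^{p^2\lambda}$, be taken to lie in the image of $\Phi$. The strategy is to use condition (1) for $u'$ applied to the ``extra'' maps $\psi_{i,v}$ with $v\not\equiv 0\pmod p$ to force $u'$ to be invariant under the base group $\Sigma_p^{pd}$ of the wreath product $\Sigma_p\wr\Sigma_{pd}\hookrightarrow\Sigma_{p^2d}$, modulo a correction in $S^{p^2\lambda}$; any such invariant element is then necessarily in the image of $\Phi$.

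The main obstacle is this reverse direction. Condition (1) only requires $\psi_{i,v}(u')$ to be \emph{some} scalar multiple of $f_{\nu}$, not zero, so matching the many scalars that appear across different $(i,v)$ demands delicate combinatorial bookkeeping and likely an inductive correction argument absorbing the obstructions into $S^{p^2\lambda}$. An alternative avoiding these complications would be to pass through the Schur functor, identifying each $\HH^1$ with an $\Ext^1$ in the rational cohomology of $GL_n$ on the corresponding Weyl module and then invoking a Frobenius-twist-shifted cohomology isomorphism for algebraic groups; however the combinatorial route above is the natural one to attempt given the machinery developed in this paper.
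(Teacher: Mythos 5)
The paper does not prove this theorem; it is quoted from \cite[Theorem 6.5.7]{HemmerCohomologyandgenericSpecht}, and the text immediately following it states that the proof there ``us[es] a lot of algebraic group machinery, and does not produce an explicit map between the two cohomology groups.'' The ``alternative'' you mention at the very end of your proposal --- passing through the Schur functor to rational cohomology of $GL_n$ and invoking a Frobenius-twist argument --- is in fact \emph{the} proof, not a fallback. More importantly, the combinatorial blow-up strategy you propose is, essentially verbatim, Problem~\ref{sec: further directions}.2 (Problem~6.2) of this paper, which the author explicitly poses as open and names as ``the motivation for this paper.'' So what you have written is not a proof of the theorem but a sketch of an attack on an acknowledged open problem.

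The gaps you flag are real and not small. In the forward direction, the inclusion $\Phi(S^{p\lambda})\subseteq S^{p^2\lambda}$ is asserted (``as the Kernel Intersection Theorem should show'') but not proved: $\Phi$ is only equivariant for the block subgroup $\Sigma_{pd}\hookrightarrow\Sigma_{p^2d}$, whereas $S^{p^2\lambda}$ is cut out by conditions equivariant for all of $\Sigma_{p^2d}$, so there is genuine work in showing $\psi_{i,v}\circ\Phi$ vanishes on polytabloids for every $v$, especially $v\not\equiv 0\pmod p$. The same issue infects the claim that $\psi_{i,v}(\Phi(u))$ is always a multiple of $f_{p^2\nu}$; ``a careful accounting combined with Kummer's theorem should force'' this is a hope, not an argument, and the relevant binomial identities are exactly the kind of thing that takes several lemmas in Section~\ref{sec: two more general examples} even for the single family $(p^a,p^a)$. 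The reverse direction (surjectivity) you yourself call ``the main obstacle,'' and you only gesture at ``delicate combinatorial bookkeeping and likely an inductive correction argument.'' Finally, even granting both directions at the level of nonvanishing, Theorem~\ref{thm: requirementonu} is a criterion for $\Ext^1\neq 0$, not a parametrization of $\Ext^1$; turning your constructions into a well-defined linear isomorphism of cohomology groups (rather than merely matching vanishing/nonvanishing) requires additional arguments you have not supplied. In short, this is a plausible research program in the spirit the author intends, but it is not a proof, and it differs entirely from the (cited, external, algebraic-group-theoretic) proof of the stated theorem.
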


The proof using a lot of algebraic group machinery, and does not produce an explicit map between the two cohomology groups; i.e. given a nonsplit extension of $S^{p\lambda}$ by $k$ as $\Sigma_{pd}$-modules, we can not obtain a corresponding $\Sigma_{p^2d}$-extension of $S^{p^2\lambda}$ by $k$. The following problem was the motivation for this paper:
\begin{prob}Suppose $\HH^1(\Sigma_{pd}, S^{p\lambda}) \neq 0$ and suppose one has constructed a $u \in M^{p\lambda}$ satisfying Theorem \ref{thm: requirementonu}. Describe a general method to construct a $\tilde{u} \in M^{p^2\lambda}$ corresponding to an element in $\HH^1(\Sigma_{p^2d}, S^{p^2\lambda})$ and realizing the isomorphism in Theorem \ref{thm: extinjectionontwistedSpecht}.
\end{prob}

\begin{prob} It is known \cite[Prop. 5.2.4]{HemmerCohomologyandgenericSpecht} that for $\lambda \neq (d)$, if $\HH^0(\Sigma_d, S^\lambda) \neq 0$ then $\HH^1(\Sigma_d, S^\lambda) \neq 0$. For each such $\lambda$ (given by Theorem  \ref{thm:JamestheoremonHom}) construct a $u$.
\end{prob}

The following is an easy consequence of Theorem \ref{thm:JamestheoremonHom}:

\begin{lem}
Suppose $\lambda=(\lambda_1, \lambda_2, \ldots, \lambda_s)\vdash d$ and suppose $a \equiv -1 \mod p^{l_p(\lambda_1)}$. Then
\begin{equation}
\label{eq:Homstabilizesaddingcong-1asnewfirstrow}
\HH^0(\Sigma_d, S^\lambda) \cong \HH^0(\Sigma_{d+a}, S^{(a,\lambda_1, \lambda_2, \ldots, \lambda_s)}).
\end{equation}
\end{lem}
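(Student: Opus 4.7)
The plan is to obtain this lemma as an almost immediate book-keeping consequence of James' Theorem~\ref{thm:JamestheoremonHom}. Since that theorem asserts $\HH^0(\Sigma_d, S^\lambda)$ is always $0$- or $1$-dimensional, it suffices to prove the two cohomology groups vanish under exactly the same hypotheses; any two spaces of dimension zero, or any two of dimension one, are of course isomorphic.

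First I would verify that $\mu := (a, \lambda_1, \lambda_2, \ldots, \lambda_s)$ is actually a partition, i.e. that $a \geq \lambda_1$. By definition of $l_p$ we have $\lambda_1 < p^{l_p(\lambda_1)}$, hence $\lambda_1 \leq p^{l_p(\lambda_1)} - 1$. Any nonnegative integer satisfying the congruence $a \equiv -1 \bmod p^{l_p(\lambda_1)}$ is at least $p^{l_p(\lambda_1)} - 1$, so indeed $a \geq \lambda_1$ and $\mu \vdash d + a$.

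Next I would read off the James criterion for both sides. For $\lambda$, nonvanishing requires
\[
\lambda_i \equiv -1 \pmod{p^{l_p(\lambda_{i+1})}} \quad \text{for all } 1 \leq i \leq s-1,
\]
(taking $\lambda_{s+1}=0$, for which $l_p=0$ and the congruence is trivial). For $\mu$, nonvanishing requires those same congruences on $\lambda$, together with the extra condition
\[
\mu_1 = a \equiv -1 \pmod{p^{l_p(\mu_2)}} = p^{l_p(\lambda_1)},
\]
which is precisely our hypothesis. Hence the conditions coincide: $\HH^0(\Sigma_d, S^\lambda) \neq 0$ if and only if $\HH^0(\Sigma_{d+a}, S^\mu) \neq 0$.

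Combining these observations with the one-dimensionality from Theorem~\ref{thm:JamestheoremonHom} gives the desired isomorphism, either $0 \cong 0$ or $k \cong k$. There is essentially no obstacle: the only thing to be careful about is confirming that $\mu$ is a legitimate partition and that the added initial row contributes exactly the one extra James congruence that is provided by hypothesis.
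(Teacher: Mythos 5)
Your proposal is correct and is exactly the derivation the paper has in mind: the paper states the lemma as an ``easy consequence'' of Theorem~\ref{thm:JamestheoremonHom} with no further proof, and your book-keeping argument (both sides are $0$- or $1$-dimensional, the James congruences for $\mu=(a,\lambda_1,\ldots,\lambda_s)$ are those for $\lambda$ plus the one extra congruence supplied by the hypothesis, and $a\geq p^{l_p(\lambda_1)}-1\geq\lambda_1$ so $\mu$ is a genuine partition) is precisely that consequence spelled out.
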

This leads to the following

\begin{prob}
\label{prob:addingfirstrow-1stabilityonext}
Does the isomorphism in \eqref{eq:Homstabilizesaddingcong-1asnewfirstrow} hold for $\HH^i$ for any other $i>0$?
\end{prob}

Perhaps the $i=1$ version of Problem \ref{prob:addingfirstrow-1stabilityonext} can be attacked using Theorem \ref{thm: requirementonu}, i.e. given a $u$ that works for $\lambda$, produce one that works for $(a, \lambda_1, \lambda_2, \ldots, \lambda_s)$.

\bibliography{references0808}
\bibliographystyle{plain}
\end{document}